\documentclass[11pt,reqno]{amsart}
\usepackage{amsmath,amssymb,mathtools}

\usepackage{amssymb,mathtools,calc,verbatim,enumitem,tikz,url,mathrsfs,fullpage}
\usepackage[noadjust]{cite}
\usepackage{bbm}
\usepackage{stmaryrd}
\usepackage{textcomp}
\usepackage{setspace}
\usepackage{amsthm}
\usepackage{amsmath}
\usepackage{graphicx}
\usepackage{marvosym}
\usepackage{empheq}
\usepackage{latexsym}
\usepackage[T1]{fontenc}
\usepackage{color}
\usepackage[noadjust]{cite}

\usepackage[
  colorlinks=false,
  citebordercolor={0.55 0.80 1},
  linkbordercolor={1 0.65 0.65},
  urlbordercolor={0.55 0.80 1},
  pdfborder={0 0 1}
]{hyperref}
\usepackage{cleveref}
\usepackage{dsfont}

\usepackage{todonotes}

\theoremstyle{plain}
\newtheorem{theorem}{Theorem}[section]
\newtheorem{lemma}[theorem]{Lemma}
\newtheorem{proposition}[theorem]{Proposition}
\newtheorem{corollary}[theorem]{Corollary}

\newtheorem{claim}[theorem]{Claim}

\theoremstyle{definition}

\newtheorem{question}[theorem]{Question}

\theoremstyle{remark}

\newcommand{\RT}{\mathrm{RT}}

\title[Prescribed-order subdigraphs with large minimum out-degree]{Prescribed-order subdigraphs with large minimum out-degree}

\author{Bin Chen}
\address{School of Mathematics and Statistics, Fuzhou University, Fujian, China}
\email{cbfzu03@163.com}

\author{Lanchao Wang}
\address{School of Mathematics, Nanjing University, Nanjing, China, and ECOPRO, Institute for Basic Science, 55 Expo-ro, Yuseong-gu, Daejeon, 34126, Korea}
\email{lanchaowang@foxmail.com}

\subjclass[2020]{05C07, 05C20}
\keywords{Subdigraph, minimum out-degree, tournament}
\date{}

\begin{document}

\begin{abstract}
Alon introduced $d(s)$ as the largest integer $d$ such that every digraph on $2n$ vertices with minimum out-degree at least $s$ contains a subdigraph on $n$ vertices with minimum out-degree at least $d$. He proved that $s/2-d(s)=O(\sqrt{s\log s})$, and further asked whether this deficit can be bounded by an absolute constant. Steiner answered this question in the negative by constructing suitable tournaments, and showed that $s/2-d(s)=\Omega(\log s)$.

Using a different construction, we show that the deficit grows at least
on the square-root scale, rather than merely logarithmically, improving the best known lower bound due to Steiner from $\Omega(\log s)$ to $\Omega(\sqrt{s})$ and leaving only a factor of $\sqrt{\log s}$ between the lower and upper bounds. This also completely settles a question raised by Steiner for tournament hosts. More generally, in the broader setting considered by Alon, our construction applies whenever the prescribed subdigraphs contain any fixed positive proportion of the vertices of the host digraph rather than specifically one half.
\end{abstract}

\maketitle

\section{Introduction}

In 2006, Alon \cite{Alon} introduced the parameter $d(s)$, defined as
the largest integer $d$ such that, for every positive integer $n$,
every digraph on $2n$ vertices with minimum out-degree at least $s$
contains a subdigraph on $n$ vertices with minimum out-degree at least
$d$. Since a random set of $n$ vertices is expected to
retain approximately half of the out-neighbours of each vertex, the
natural benchmark for $d(s)$ is $s/2$. By applying the probabilistic method,
Alon \cite{Alon} proved that
$
d(s)\ge \frac{s}{2}-O\bigl(\sqrt{s\log s}\bigr),
$
and he further posed the following question.

\begin{question}[\cite{Alon}]\label{n=2m}
Determine the value of $d(s)$. In particular, is it true that
$
\frac{s}{2}-d(s)=O(1)?
$
\end{question}

This question belongs to the broader study of vertex partitions under
degree constraints. Classical results for graphs include the
decomposition theorem of Lov\'asz \cite{LovaszDecomp} and the
minimum-degree partition theorem of Stiebitz \cite{Stiebitz}. Related
questions have also been studied for digraphs; see, for example,
\cite{Yang} for balanced tournament bisections,
\cite{Majority,GKP} for majority colourings, and \cite{LLS} for
judicious partitions.

Surprisingly, Steiner \cite{Steiner} answered Question~\ref{n=2m} in the negative. Concretely, he
constructed a sequence of tournaments on $2n$ vertices, each with
minimum out-degree $s=n-1$, such that every subtournament on $n$ vertices has
minimum out-degree at most
$
\frac{s}{2}-\left(\frac12+o(1)\right)\log_3 s.
$
This indicates that
$
\frac{s}{2}-d(s)=\Omega(\log s),
$
which means that the deficit is unbounded by an absolute constant even if the host digraph is
restricted to a tournament.

Motivated by this construction, Steiner also introduced the parameter
$d_T(s)$, defined as the largest integer $d$ such that, for every
positive integer $n$, every tournament on $2n$ vertices with minimum
out-degree at least $s$ contains a subtournament on $n$ vertices with
minimum out-degree at least $d$. Notice that
$
d_T(s)\ge d(s).
$
In \cite{Steiner}, Steiner then posed the following question.

\begin{question}[\cite{Steiner}]\label{question:tournament}
Is it true that
$
\frac{s}{2}-d_T(s)=\Theta(\sqrt{s})?
$
\end{question}

The upper bound $\frac{s}{2}-d_T(s)=O(\sqrt{s})$ follows from a result
of Alon, Bang-Jensen and Bessy \cite[Theorem~7.1]{ABB}. Thus, it remained to
establish a matching lower bound. Steiner \cite{Steiner} took the
first step by proving a logarithmic deficit.

In this paper, we prove
$\frac{s}{2}-d_T(s)=\Omega(\sqrt{s})$ using a random regular tournament
construction, thereby resolving Question~\ref{question:tournament} in
the affirmative.

\begin{theorem}\label{thm}
$
\frac{s}{2}-d_T(s)=\Theta(\sqrt{s}).
$
\end{theorem}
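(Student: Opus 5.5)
The upper bound $\frac{s}{2}-d_T(s)=O(\sqrt{s})$ is the result of Alon, Bang-Jensen and Bessy quoted above, so only the lower bound needs proof. That is, we must exhibit, for infinitely many $s$, a tournament $T$ on $2n$ vertices with $\delta^+(T)\ge s$ in which every subtournament on $n$ vertices has minimum out-degree at most $s/2-c\sqrt{s}$. Following the abstract, take $T$ to be a uniformly random regular tournament on $2n=2s+2$ vertices. Strictly a regular tournament needs an odd number of vertices, so we use an almost-regular one on an even number of vertices, or equivalently a random regular tournament on $2s+1$ vertices plus one vertex. Then every out-degree is about $s$ and $n\approx s$. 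The target statement is: with positive probability, every $n$-set $S$ contains a vertex $v$ with $|N^+(v)\cap S|\le s/2-c\sqrt{s}$.

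Fix an $n$-set $S$. In $T[S]$ the average out-degree is exactly $(n-1)/2$. So a minimum out-degree of $s/2-c\sqrt{s}$ forces the out-degrees inside $S$ to be unusually concentrated: all $n$ of them must lie in a window $[(n-1)/2-c\sqrt n,\infty)$, while their mean is only $(n-1)/2$. For a random tournament, the in-$S$ out-degrees of distinct vertices behave like nearly independent sums with standard deviation $\Theta(\sqrt n)$. The event that none of them falls below mean$-c\sqrt n$ should therefore have probability about $p^{n}$, where $p=\Pr[N(0,1)\ge -c']<1$. More precisely, the event is a "no lower tail" event for $n$ weakly dependent variables whose sum is fixed. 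I would bound it by a switching or Chernoff-type argument: expose the arcs inside $S$ via an auxiliary sequence of random tournaments, or use the known asymptotic enumeration of tournaments with given score sequence (McKay), which gives $\Pr[\text{score vector}=\mathbf{d}]\approx \prod \binom{n-1}{d_i}2^{-(n-1)}\cdot e^{O(1)}$. This would yield the bound $\Pr[\delta^+(T[S])>n/2-c\sqrt n]\le e^{-\varepsilon(c) n}$, with $\varepsilon(c)\to\infty$ as $c\to\infty$? That is false, because $\varepsilon$ is bounded by $\log(1/p)$. Here lies the \textbf{main obstacle}: the union bound over $\binom{2n}{n}\approx 4^{n}$ sets needs $\varepsilon>\log 4$. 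Since $p(c)\to 1/2\cdot$… we must choose $c$ small, not large, so that $p=\Pr[Z\ge -c]$ is close to $1/2$. Even then $p^{n}\approx 2^{-n}$, which loses against $4^n$.

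To overcome this, I would exploit the regularity of the host. Outside $S$ the roles are complementary: for $v\in S$, $|N^+(v)\cap S|=s-|N^+(v)\cap \bar S|$. Low in-$S$ out-degree of $v$ is thus equivalent to high out-degree into $\bar S$. Counting arcs between $S$ and $\bar S$ then couples $S$ and $\bar S$, so each of the $2n$ vertices contributes an independent-ish factor. We need all vertices of $S$ to have in-$S$ degree $\ge n/2-c\sqrt n$, which means degree into $\bar S$ at most $n/2+c\sqrt n$. Alternatively, I would condition on the full random regular tournament and treat it via the configuration-type model for regular tournaments, in which the out-neighbourhood of each vertex is nearly a uniformly random $s$-subset. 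The in-$S$ degrees are then nearly independent hypergeometrics with a joint constraint that costs only a polynomial factor. This gives $\Pr\lesssim \mathrm{poly}(n)\, q(c)^{n}$, where $q(c)=\Pr[Z\ge -2c]$ for a suitably normalized $Z$.

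The union bound still requires $q(c)^n 4^n\to0$, which is impossible for $q>1/4$. So the argument must instead be structured around a fixed small set of "witness" vertices, or use second-moment reasoning on $S$ weighted by its balanced structure. Concretely, I would refine the count by first choosing the $\sim\sqrt n$ vertices with smallest out-degree in $T[S]$ and showing that enumerating these, rather than all of $S$, suffices. I expect this entropy-versus-probability balancing to be the hard core of the proof. The rest, namely translating the statement to $s\to$ general $s$ by monotonicity and padding, is routine.
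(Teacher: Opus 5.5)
Your proposal does not prove the lower bound. It concludes that a union bound over $n$-sets cannot work, then gestures at ``witness vertices'' or a second-moment argument without carrying either out. The obstacle comes from a wrong heuristic. You estimate $\Pr[\delta^+(T[S])\ge (n-1)/2-c\sqrt n]$ by $p^n$ with $p=\Pr[Z\ge -c]\ge 1/2$, treating the identity $\sum_{v\in S}d^+_{T[S]}(v)=\binom n2$ as costing only a polynomial factor. That holds for unrestricted score vectors, but not in combination with the lower-tail condition. Under your independence heuristic, scores all at least $h=(n-1)/2-c\sqrt n$ would typically sum to $\binom n2$ plus order $n^{3/2}$, so forcing the sum to equal $\binom n2$ is itself exponentially unlikely.

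Quantitatively, the excesses $z_v=a_v-\lceil h\rceil\ge0$ sum to at most $cn^{3/2}$. So the score vector of $T[S]$ lies in a simplex with at most $\binom{\lfloor cn^{3/2}\rfloor+n}{n}\le (C_1c\sqrt n)^n$ lattice points. Any single near-regular score vector has probability only about $(C/\sqrt n)^n$. The product is $(C'c)^n$, so in fact $\varepsilon(c)\to\infty$ as $c\to0$, the opposite direction to the one you tested. For a small constant $c$ the plain union bound over the at most $2\cdot4^n$ sets therefore goes through. Even the product heuristic you quote gives this, up to polynomial factors, if you sum it over the section of the orthant $\{d_v\ge h\}$ by the hyperplane $\sum_v d_v=\binom n2$ rather than over the whole orthant.

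The paper makes the per-vector bound rigorous using only McKay's enumeration of regular tournaments. Take $M$ odd, $Q=M-n$, and $R$ uniform among regular tournaments on $M$ vertices, and fix a score vector $a$ for $R[S]$.
\begin{itemize}
\item There are at most $F_n$ choices of $R[S]$.
\item Regularity forces each $v\in S$ to have exactly $(M-1)/2-a_v$ out-neighbours in $\bar S$, giving at most $\binom{Q}{\lfloor Q/2\rfloor}^n$ cross orientations.
\item The score vector of $R[\bar S]$ is then forced, giving at most $F_Q$ choices.
\end{itemize}
The bound $F_m\le A^m2^{\binom m2}m^{-(m-2)/2}$ comes from the vertex-deletion recurrence $F_m\le\binom{m-1}{\sigma_m}F_{m-1}$. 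Dividing by McKay's bound $\RT(M)\ge B^{-M}2^{\binom M2}M^{-(M-2)/2}$ gives probability at most $C^nn^{-(n-2)/2}$ per vector. Hence $\Pr[\delta^+(R[S])\ge h]\le n(C'c)^n$.

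Finally, take $M=2n+1$ and delete one vertex of $R$ rather than adding one. Then every $n$-set of the $2n$-vertex host is an $n$-set of $R$, and the minimum out-degree is still at least $n-1=s$.
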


Moreover, Theorem~\ref{thm} improves the lower bound for Alon's original problem,
Question~\ref{n=2m}, from $\Omega(\log s)$ to $\Omega(\sqrt{s})$.
That is, in general digraphs, the deficit must grow at least on the square-root scale, rather than
merely logarithmically, leaving only a factor of
$\sqrt{\log s}$ between the currently known lower and upper bounds.

\begin{corollary}\label{cor:main}
$
\Omega(\sqrt{s})
\le
\frac{s}{2}-d(s)
\le
O\bigl(\sqrt{s\log s}\bigr).
$
\end{corollary}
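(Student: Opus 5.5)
The corollary follows from two facts already on the table. The upper bound $\frac{s}{2}-d(s)=O(\sqrt{s\log s})$ is Alon's probabilistic theorem \cite{Alon}, quoted in the introduction. I would recall its proof only briefly: choose a uniformly random $n$-subset $S$ of the $2n$ vertices, after first trimming every vertex's out-neighbourhood to exactly $s$ arcs so that the degrees are controlled. By hypergeometric concentration, a fixed vertex keeps at least $s/2-C\sqrt{s\log s}$ of its $s$ out-neighbours in $S$, except with probability $s^{-c}$. Alon's argument then uses a local-lemma or alteration step to handle the whole set simultaneously. Since this is cited rather than proved here, I would simply invoke it.

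For the lower bound I use the monotonicity $d_T(s)\ge d(s)$, which was noted right after Steiner's definition. Every tournament is a digraph. So if every digraph on $2n$ vertices with minimum out-degree at least $s$ has an $n$-vertex subdigraph of minimum out-degree at least $d(s)$, the same holds in particular for tournaments, and hence $d_T(s)\ge d(s)$. Rearranging gives
\[
\frac{s}{2}-d(s)\ \ge\ \frac{s}{2}-d_T(s).
\]
Theorem~\ref{thm} says the right-hand side is $\Theta(\sqrt{s})$, and in particular $\Omega(\sqrt{s})$. This gives the left inequality.

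At the level of the corollary there is no real obstacle: it is a two-line deduction. All the substance lies in the lower-bound half of Theorem~\ref{thm}. That half requires a random regular tournament on $2n$ vertices, with $s=n-1$ or a suitable $s$ close to it, in which every $n$-subset contains a vertex with out-degree deficit $\Omega(\sqrt{n})$ inside it. Proving this needs a union bound over the $\binom{2n}{n}$ subsets, using that many roughly independent vertices each fall short by $c\sqrt{n}$ with constant probability. That is the hard step, but it is the content of Theorem~\ref{thm} and I may assume it here.

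One point deserves a check. The definitions quantify over all $n$ with $s$ fixed, while Theorem~\ref{thm} is asymptotic in $s$. Both sides of the comparison are functions of $s$ alone, and the inequality $d_T(s)\ge d(s)$ holds for each $s$. So the asymptotic statement transfers directly, with the same implied constant.
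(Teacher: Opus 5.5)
Your deduction is correct and is the same as the paper's. The paper also gets the lower bound from $d_T(s)\ge d(s)$ together with the $\Omega(\sqrt{s})$ half of Theorem~\ref{thm}, and the upper bound by citing Alon. Your sketches of how those two ingredients are proved are not needed. The lower-bound sketch also does not match the paper, which counts out-degree vectors and compares with McKay's enumeration of regular tournaments rather than using an independence heuristic; this does not affect the deduction.
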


Alon \cite{Alon} also asked the more general question of characterizing
the quadruples $(n,m,s,d)$ for which every digraph on $n$ vertices
with minimum out-degree at least $s$ contains a subdigraph on $m$
vertices with minimum out-degree at least $d$. To study the fixed-ratio setting, we fix a real number $\alpha>1$ and
let $d(\alpha,s)$ denote the largest integer $d$ such that for every
positive integer $n$, every digraph on $\lfloor\alpha n\rfloor$
vertices with minimum out-degree at least $s$ contains a subdigraph on $n$ vertices with minimum out-degree at least $d$. Analogously, we define
$d_T(\alpha,s)$ for tournament hosts. Clearly, we have
$
d(2,s)=d(s)
$, $
d_T(2,s)=d_T(s),
$
as well as
$
d_T(\alpha,s)\ge d(\alpha,s).
$

The natural benchmark in this setting is $s/\alpha$, since a uniformly
chosen $1/\alpha$-fraction of the vertex set retains, in expectation,
approximately a $1/\alpha$-fraction of every out-neighbour. By a straightforward adaptation of Alon's probabilistic argument
\cite{Alon}, one obtains $
\frac{s}{\alpha}-d(\alpha,s)
=
O_\alpha\bigl(\sqrt{s\log s}\bigr).
$
Recently, Chen \cite{Chen} proved that
$
\frac{s}{\alpha}-d(\alpha,s)
=
\Omega_\alpha(\log s)
$ when $\alpha$ is an integer at least two. 

We improve this logarithmic lower bound to the square-root scale for
every real $\alpha>1$. Together with our extension of the out-splitting
theorem of Alon, Bang-Jensen and Bessy \cite{ABB}, this yields the
following strengthening of Theorem~\ref{thm}.
\begin{theorem}\label{thm:general-alpha}
For every real number $\alpha>1$, 
$
\frac{s}{\alpha}-d_T(\alpha,s)
=
\Theta_\alpha(\sqrt{s}).
$
\end{theorem}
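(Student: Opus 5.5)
The proof has two halves: the upper bound $\frac{s}{\alpha}-d_T(\alpha,s)=O_\alpha(\sqrt s)$, and the matching lower bound $\Omega_\alpha(\sqrt s)$, which needs a construction.

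\emph{Upper bound.} Let $T$ be a tournament on $N=\lfloor \alpha n\rfloor$ vertices with $\delta^+(T)\ge s$. Since $s\le (N-1)/2$, we have $n=\Theta_\alpha(N)\ge \Omega_\alpha(s)$. I would first prove the announced extension of the out-splitting theorem of Alon, Bang-Jensen and Bessy \cite[Theorem~7.1]{ABB} to unequal part sizes. The target statement: $V(T)$ can be split into parts $A,B$ with $|A|=n$, such that every vertex $v\in A$ has at least $\frac{|A|}{N}d^+(v)-C_\alpha\sqrt{s}$ out-neighbours in $A$.

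The argument would follow their proof. Start from a uniformly random $n$-set. Then use a discrepancy-type rounding (or the iterative swapping argument of \cite{ABB}) so that, for vertices of out-degree $\Theta(N)$, the loss is $O(\sqrt N)$ rather than $O(\sqrt{N\log N})$. In tournaments this suffices, because every vertex has out-degree at most $N-1=O_\alpha(s)$. Then $T[A]$ has minimum out-degree at least $s/\alpha-O_\alpha(\sqrt s)$.

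\emph{Lower bound: the construction.} Take $N=2k+1$ and let $T$ be a uniformly random regular tournament on $N$ vertices, so $s=k$, and set $n=\lfloor N/\alpha\rfloor$ (adjusting $N$ so that $\lfloor\alpha n\rfloor=N$). Fix an $n$-set $S$. In $T[S]$ the out-degrees sum to $\binom n2$, so the average is $(n-1)/2=s/\alpha+O(1)$. It therefore suffices to show the following: with positive probability, every $n$-set $S$ contains a vertex of out-degree at most $(n-1)/2-c\sqrt n$ in $T[S]$, for a small constant $c=c(\alpha)>0$.

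\emph{Lower bound: the probability estimate.} Write $d_S(v)-\frac{n-1}2=x_v$, so that $\sum_v x_v=0$. If all $x_v\ge -c\sqrt n$, then $\sum_v |x_v|\le 2cn^{3/2}$. Hence $\sum_v x_v^2$, or any suitable convex functional of the deviations, is atypically small: in a random tournament on $S$ each $x_v$ has variance about $n/4$, so typically $\sum_v|x_v|=\Theta(n^{3/2})$ with a large absolute constant. I would bound the probability of this event by an exponential-moment computation, using that the arcs of $T[S]$ are independent in a uniform tournament. Concretely, take $\mathbb{E}\exp(-\lambda\sum_v|x_v|/\sqrt n)$ and control it via a decoupling over a random bipartition of $S$. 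The resulting bound is $\varepsilon(c)^n$ with $\varepsilon(c)\to0$ as $c\to0$. Choosing $c$ small makes this beat the union bound over $\binom Nn\le e^{H(1/\alpha)N}$ sets.

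\emph{Main obstacle.} The hardest step is transferring this estimate from the uniform tournament to the uniform \emph{regular} tournament. Conditioning on regularity costs a factor $e^{\Theta(N\log N)}$, which is too expensive if done naively. My plan is to condition on all arcs not inside $S$. The arcs inside $S$ are then a uniformly random tournament on $S$ with a prescribed score sequence, and that score sequence is close to balanced with high probability. I would then use McKay-type asymptotic enumeration of tournaments with given near-regular score sequences to compare probabilities with the independent model up to a factor $e^{o(n)}$ or $e^{O(n)}$ with a controllable constant. Alternatively, I would try a switching argument that directly lower-bounds the number of vertices in $S$ with deficit $c\sqrt n$. If this comparison loses too much, the fallback is to weaken regularity: use a random tournament with $\delta^+\ge s$ built from a random balanced blow-up, where independence is easier to retain.
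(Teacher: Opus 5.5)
\textbf{Upper bound.} Your justification that ``every vertex has out-degree at most $N-1=O_\alpha(s)$'' is false. In the definition of $d_T(\alpha,s)$ the integer $n$ is arbitrary, so the host can have $N=\lfloor\alpha n\rfloor\gg s$ vertices; only $N\ge 2s+1$ is forced. An error of order $\sqrt N$ is then useless for vertices of out-degree close to $s$. Your target statement, with error $C_\alpha\sqrt s$ for every $v\in A$, is in fact false: apply it with $s=1$ to the (near-)regular tournaments from the lower-bound half, and it would give an $n$-set in which every vertex has at least $\frac{n-1}{2}-O_\alpha(1)$ out-neighbours.

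What you need is an error $O(\sqrt{d^+(v)})$ for each vertex individually, at all scales of $d^+(v)$ at once, with $|A|=n$ exactly for an arbitrary density $p=n/N$. Saying the argument ``would follow'' Alon--Bang-Jensen--Bessy supplies no mechanism for this. In the paper it comes from iterating the Lovett--Meka partial colouring on the set $W$ of unfixed coordinates, starting from $(2p-1)\mathbf{1}$:
\begin{itemize}
\item threshold $c_0=0$ on $\mathbf{1}_W$, so the size stays exactly $n$;
\item threshold $0$ for the $r=\lfloor|W|/64\rfloor$ smallest out-neighbourhoods;
\item threshold $4\sqrt{2\log(i/r)}$ for the one of rank $i>r$.
\end{itemize}
The tournament growth condition $|\{v:d^+(v)\le t\}|\le 2t+1$, which your proposal never uses, is what makes this work. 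It guarantees that any out-neighbourhood receiving a positive threshold has size $\Omega(|W|)$ and threshold $O\bigl(\sqrt{\log(Cd^+(v)/|W|)}\bigr)$, so the stage-by-stage errors sum to $O(\sqrt{d^+(v)})$.

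\textbf{Lower bound.} Your decoupled exponential-moment bound in the independent model is sound in outline: fixing the arcs inside one half $S_1$ of $S$ makes the scores of the vertices of $S_1$ independent shifted binomials. The gap is the transfer to the regular model, which you rightly call the crux but leave open, and the conditioning you propose cannot work. In a regular tournament the score of $v\in S$ in $T[S]$ equals $\frac{N-1}{2}-|N^+(v)\setminus S|$. So once all arcs not inside $S$ are fixed, the score vector of $T[S]$ is already determined, and with it the event $\delta^+(T[S])\ge\frac{n-1}{2}-c\sqrt n$. The uniform law of $T[S]$ within its score class therefore contributes nothing. Also, ``the score sequence is close to balanced with high probability'' points the wrong way: the task is to show that the cross arcs make it \emph{exceptionally} balanced only with probability $\varepsilon(c)^n$. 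Comparing $1/T_n(a)$ pointwise with $2^{-\binom n2}$ would in any case lose a factor $n^{\Theta(n)}$.

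The missing ingredient is a uniform comparison of the form $\Pr_{\mathrm{reg}}(T[S]=\tau)\le C_\alpha^N2^{-\binom n2}$ for every tournament $\tau$ on $S$. Combined with your independent-model estimate and a union bound over $\binom Nn\le K_\alpha^n$ sets, this would finish the argument. The paper gets such a bound by counting completions:
\begin{itemize}
\item at most $\binom{Q}{\lfloor Q/2\rfloor}^n$ orientations of the cross arcs, where $Q=N-n$;
\item at most $F_Q$ choices for the rest, where $F_m\le A^m2^{\binom m2}m^{-(m-2)/2}$ follows from the vertex-deletion recurrence $F_m\le\binom{m-1}{\sigma_m}F_{m-1}$.
\end{itemize}
Against McKay's $\mathrm{RT}(N)\ge B^{-N}2^{\binom N2}N^{-(N-2)/2}$ and the identity $\binom n2+nQ+\binom Q2=\binom N2$, the $m^{-m/2}$-type factors cancel up to $C_\alpha^N$. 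The paper then simply counts the at most $(C\gamma\sqrt n)^n$ admissible score vectors of $T[S]$, each of probability at most $C_\alpha^n n^{-(n-2)/2}$, instead of using a moment bound.

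Finally, a smaller point: for $\alpha=2$ every $\lfloor\alpha n\rfloor$ is even, so no regular tournament has exactly $\lfloor\alpha n\rfloor$ vertices. Take one on $\lfloor\alpha n\rfloor+1$ vertices and delete a vertex. This costs at most $1$ in minimum out-degree, and every $n$-vertex subtournament of the result is one of the original.
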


We remark that our proof of the upper bound differs from that of Alon, Bang-Jensen and Bessy \cite{ABB}. In particular, our argument gives an alternative proof of their result. Combining Theorem~\ref{thm:general-alpha} with Alon's upper bound
\cite{Alon} gives the following immediate corollary.

\begin{corollary}\label{cor:general-alpha}
For every real number $\alpha>1$, 
$
\Omega_\alpha(\sqrt{s})
\le
\frac{s}{\alpha}-d(\alpha,s)
\le
O_\alpha\bigl(\sqrt{s\log s}\bigr).
$
\end{corollary}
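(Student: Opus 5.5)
The statement is a combination of Theorem~\ref{thm:general-alpha} with Alon's upper bound, so the proof is two short comparisons.

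\emph{Upper bound.} This is the adaptation of Alon's probabilistic argument to ratio $\alpha$, as mentioned before Theorem~\ref{thm:general-alpha}. Let $G$ have $N=\lfloor\alpha n\rfloor$ vertices and minimum out-degree at least $s$.
\begin{enumerate}
\item We may assume $s\le N-1$, and that every vertex has out-degree exactly $s$, by deleting arcs.
\item Pick a uniformly random $n$-subset $S$. For each $v$, the quantity $|N^+(v)\cap S|$ is hypergeometric with mean $sn/N\ge s/\alpha$.
\item A Chernoff bound gives
\[
\Pr\Bigl[|N^+(v)\cap S|< s/\alpha - C\sqrt{s\log s}\Bigr]\le s^{-c}
\]
for every vertex $v$.
\item We cannot union bound over all $N$ vertices, since $N$ may be much larger than $s$. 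Instead we follow Alon: use the Lov\'asz Local Lemma, or a two-stage random choice plus greedy repair, where dependencies are bounded polynomially in $s$ after first passing to a subgraph in which in-degrees are also controlled.
\end{enumerate}
This yields a subdigraph on $n$ vertices with minimum out-degree at least $s/\alpha-O_\alpha(\sqrt{s\log s})$. That is, $s/\alpha-d(\alpha,s)=O_\alpha(\sqrt{s\log s})$.

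\emph{Lower bound.} A tournament is a digraph, so every tournament host is admissible in the definition of $d(\alpha,s)$. The infimum over the larger class is smaller, so $d(\alpha,s)\le d_T(\alpha,s)$. Hence
\[
\frac{s}{\alpha}-d(\alpha,s)\ \ge\ \frac{s}{\alpha}-d_T(\alpha,s)=\Omega_\alpha(\sqrt{s}),
\]
where the last bound is the lower-bound half of Theorem~\ref{thm:general-alpha}.

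The only substantive step is the upper bound, specifically handling unbounded host size $N$ in the Local Lemma step. Since this is exactly Alon's argument with the ratio $1/2$ replaced by $1/\alpha$, we cite it rather than reprove it.
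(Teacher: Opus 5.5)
Your proposal is correct and follows the paper exactly. The lower bound is $d(\alpha,s)\le d_T(\alpha,s)$ combined with the lower-bound half of Theorem~\ref{thm:general-alpha} (i.e.\ Proposition~\ref{thm:construction}), and the upper bound is, as in the paper, just the cited adaptation of Alon's probabilistic argument.

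Since you cite that adaptation rather than prove it, the looseness of your step 4 does no harm. If you ever write it out, though, note two things. First, one cannot in general delete arcs to bound in-degrees while keeping out-degree $s$. Second, the standard local lemma wants independent choices, not a uniform $n$-subset. A careful version would instead force the few vertices of very large in-degree into the set, select the others independently with probability slightly below $n/\lfloor\alpha n\rfloor$ so that the chosen set can be taken of size at most $n$, and then pad up to exactly $n$ vertices.
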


The rest of the paper is organized as follows. In the next section, we give an overview of the proof of Theorem~\ref{thm:general-alpha}. In Section~3, we prove Theorem~\ref{thm:general-alpha}, first establishing
the lower bound on the deficit and then the corresponding upper bound.


\section{Overview of the proof of Theorem~\ref{thm:general-alpha}}\label{sec:proof-overview}

The proof of Theorem~\ref{thm:general-alpha} has two main ingredients. For the lower bound on the deficit, we shall consider a uniformly
random labelled regular tournament and then use McKay's enumeration of
regular tournaments \cite{McKay} to show that, with positive
probability, none of its prescribed-size subtournaments has minimum
out-degree too close to the natural benchmark.

For the upper bound,  we will follow the discrepancy
framework of Alon, Bang-Jensen and Bessy \cite{ABB}, but replace their
entropy-based partial-colouring argument and final application of
Spencer's theorem by an iterative application of the Lovett--Meka
partial-colouring method \cite{LM}. This gives the required control for
every prescribed density and, in particular, provides a new proof of
the balanced case of their out-splitting theorem.

Let $T$ be a tournament. For a set $X\subseteq V(T)$, we write $T[X]$ for the subtournament induced by $X$.  Denote by $N_T^+(v)$ the set of out-neighbours of $v$, and by
$d_T^+(v)$ the corresponding out-degree in $T$. Moreover, let $\delta^+(T)$ denote the minimum out-degree of $T$.

\subsection{The lower bound on the deficit}

Fix $\alpha>1$, let $M=\alpha N+O_\alpha(1)$ be an odd integer, and pick a
uniformly random labelled regular tournament $T$ on $M$ vertices. For a
fixed vertex set $X$ of size $N$, we estimate the probability that
$\delta^+(T[X])\ge (N-1)/2-\gamma\sqrt N$.

The estimate is organized around out-degree vectors, which record the
out-degree of every vertex. Let $F_m$ denote the maximum, over all integer
vectors $\sigma\in\mathbb Z^m$, of the number of labelled tournaments on
$m$ vertices having out-degree vector $\sigma$. 
A vertex-deletion recurrence shows that
$F_m\le O(1)^m2^{\binom m2}m^{-(m-2)/2}$. If $T[X]$ has minimum
out-degree at least $h=(N-1)/2-\gamma\sqrt N$, then its out-degree vector
$\mathbf{a}=(a_x)_{x\in X}$ satisfies $a_x\ge h$ and
$\sum_{x\in X}a_x=\binom N2$. Hence there are at most $(C\gamma\sqrt N)^N$ possible vectors
$\mathbf a$.

Fix such a vector, set $Y=V(T)\setminus X$, and write $Q=|Y|$.
There are at most $F_N$ choices for $T[X]$. Regularity determines the
out-degree of each vertex of $X$ into $Y$, so there are at most
$C_\alpha^N2^{NQ}N^{-N/2}$ possible orientations of the cross-edges. Once these edges
are fixed, the out-degree vector of $T[Y]$ is forced, leaving at most $F_Q$
choices. Comparing this with McKay's enumeration of regular
tournaments \cite{McKay}, we can deduce that
\[
\Pr\left(\delta^+(T[X])\ge\frac{N-1}{2}-\gamma\sqrt N\right)
\le N(C_\alpha\gamma)^N.
\]
A union bound, with $\gamma$ chosen sufficiently small, gives a regular tournament in which
every subtournament on $N$ vertices has minimum out-degree less than
$(N-1)/2-\gamma\sqrt N$.

Finally, choose $N$ as small as possible such that
$\lfloor\alpha N\rfloor\ge2s+2$. Then
$N=2s/\alpha+O_\alpha(1)$. By adjusting the order by at most one vertex to handle parity, we
obtain a tournament on $\lfloor\alpha N\rfloor$ vertices with minimum
out-degree at least $s$. It follows that
$d_T(\alpha,s)\le s/\alpha-a_\alpha\sqrt s$ for some $a_\alpha>0$.

\subsection{The upper bound on the deficit}
For the upper bound, the main ingredient is a prescribed-order rounding statement. Let
$\mathcal A=(A_1,\ldots,A_m)$ be an indexed family of non-empty subsets
of an $M$-element set $\Omega$ satisfying the growth condition
$
|\{i:|A_i|\le t\}|\le 3t
$
for every $t\ge1$. We prove that, for every
$q\in\{0,\ldots,M\}$, there is a set $X\subseteq\Omega$ of size exactly
$q$ such that
\[
\left||X\cap A_i|-\frac qM|A_i|\right|
=O(\sqrt{|A_i|}),
\]
where $i\in[m]$.

To encode the choice of $X$, we use a vector in $[-1,1]^\Omega$, where
the values $1$ and $-1$ correspond respectively to selected and
unselected elements. We start from the constant vector $\mathbf{z}$ whose coordinates are all equal to $2q/M-1$. We call a coordinate unfixed if its value lies in $(-1,1)$. Then we iteratively apply the Lovett--Meka partial-colouring lemma to the unfixed coordinates.
At each step, at least half of the unfixed coordinates are rounded to
values in $\{-1,1\}$. Meanwhile, the total coordinate sum is preserved and
the discrepancy on every $A_i$ is controlled. Once only a bounded number
of coordinates remain, they are rounded directly. This produces a vector
$\mathbf z^*\in\{-1,1\}^\Omega$ with coordinate sum $2q-M$ and
discrepancy $O(\sqrt{|A_i|})$ on each $A_i$. Setting
$
X=\{x\in\Omega:z_x^*=1\},
$
we obtain the required set of size $q$.

For a tournament $T$ on $M$ vertices, we apply this conclusion to the
family
\[
A_v:=N_T^+(v),
\]
where $v\in V(T)$ and we omit empty out-neighbourhoods. The required growth condition follows
from the fact that any tournament has at most $2t+1$ vertices of
out-degree at most $t$. Therefore, for every $q$, there is a set
$X\subseteq V(T)$ of size $q$ such that
\[
\left|
|N_T^+(v)\cap X|-\frac qM d_T^+(v)
\right|
\le O\left(\sqrt{d_T^+(v)}\right)
\]
for each $v\in V(T)$. Taking $M=\lfloor\alpha n\rfloor$ and $q=n$,
and applying this bound to a tournament with minimum out-degree at least
$s$, one can get
$
d_T(\alpha,s)\ge s/\alpha-O(\sqrt s).
$

\section{Proof of Theorem~\ref{thm:general-alpha}}

\subsection{The lower bound: a random regular tournament construction}\label{sec:construction}

We prove the following result.

\begin{proposition}\label{thm:construction}
For every real number $\alpha>1$, there are constants $a_\alpha>0$ and $s_0(\alpha)$ such that, for every $s\ge s_0(\alpha)$,
\[
d_T(\alpha,s)\le \frac{s}{\alpha}-a_\alpha\sqrt{s}.
\]
\end{proposition}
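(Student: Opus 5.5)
We will follow the random regular tournament construction outlined in Section~\ref{sec:proof-overview}. Fix $\alpha>1$ and a small $\gamma=\gamma(\alpha)>0$ to be chosen. For $N$ large, let $M$ be an odd integer with $M=\alpha N+O_\alpha(1)$ and $M\ge \lfloor \alpha N\rfloor$, and let $T$ be a uniformly random labelled regular tournament on $M$ vertices. Every vertex of $T$ has out-degree $(M-1)/2$. The goal is to show that with positive probability every $X\subseteq V(T)$ with $|X|=N$ satisfies
\[
\delta^+(T[X])<\tfrac{N-1}{2}-\gamma\sqrt N .
\]
After that, we pass to the required order and rescale in terms of $s$.

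\textbf{Counting step.} Let $F_m$ be the maximum number of labelled tournaments on $m$ vertices with a prescribed out-degree vector. First prove $F_m\le C^m2^{\binom m2}m^{-(m-2)/2}$ by induction on $m$, deleting one vertex $v$ of out-degree $a_v$. There are at most $\binom{m-1}{a_v}\le C2^{m-1}/\sqrt m$ choices for $N^+(v)$, and the remaining out-degree vector is then determined, which gives
\[
F_m\le \frac{C2^{m-1}}{\sqrt m}F_{m-1}.
\]
Now fix $X$ with $|X|=N$, put $Y=V(T)\setminus X$ and $Q=M-N$, and count regular tournaments $T$ with $\delta^+(T[X])\ge h:=(N-1)/2-\gamma\sqrt N$.
\begin{enumerate}
\item The out-degree vector $\mathbf a$ of $T[X]$ has entries $a_x\ge h$ and sum $\binom N2$. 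Hence each excess $a_x-h$ lies in $[0,\gamma\sqrt N\cdot O(1)]$ on average, and there are at most $(C\gamma\sqrt N)^N$ such vectors.
\item Given $\mathbf a$, there are at most $F_N$ choices of $T[X]$.
\item Each $x\in X$ must send exactly $(M-1)/2-a_x$ arcs into $Y$, so there are at most $\prod_x\binom{Q}{(M-1)/2-a_x}\le (C_\alpha 2^{Q}/\sqrt N)^N$ choices of cross arcs.
\item These choices force the out-degree vector of $T[Y]$, leaving at most $F_Q$ choices.
\end{enumerate}

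\textbf{Comparison with McKay.} McKay's asymptotic formula gives the number of regular tournaments on $M$ vertices as
\[
2^{\binom M2}\Bigl(\tfrac{2}{\pi M}\Bigr)^{(M-1)/2}e^{O(1)}\ge c^M 2^{\binom M2}M^{-M/2}.
\]
Dividing the count from the counting step by this, and using $\binom N2+NQ+\binom Q2=\binom M2$, the probability is at most
\[
C_\alpha^{M}(\gamma\sqrt N)^N N^{-N/2}\,N^{-N/2}Q^{-Q/2}M^{M/2}\cdot\mathrm{poly}(N).
\]
Since $M=\alpha N+O(1)$ and $Q=(\alpha-1)N+O(1)$, we have $M^{M/2}/(N^{N/2}Q^{Q/2})=C_\alpha^{N}N^{N/2}$. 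The bound therefore simplifies to $N(C_\alpha\gamma)^N$, with $C_\alpha$ absorbing constants. This bookkeeping of the powers of $N$ is the step I expect to be the main obstacle. The gain comes entirely from the factor $\gamma^N$ against the $N^{-N/2}$ losses, so the estimate on $F_m$ must be sharp up to $C^m$.

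\textbf{Union bound and rescaling.} A union bound over the $\binom MN\le (e\alpha)^N$ sets gives total failure probability
\[
N(e\alpha C_\alpha\gamma)^N<1
\]
for $\gamma$ small and $N$ large, so the desired tournament exists. Given $s\ge s_0(\alpha)$, choose the least $N$ with $\lfloor\alpha N\rfloor\ge 2s+2$, so that $N=2s/\alpha+O_\alpha(1)$. Take $M\in\{\lfloor\alpha N\rfloor,\lfloor\alpha N\rfloor+1\}$ odd. If $M=\lfloor\alpha N\rfloor+1$, delete one vertex of the regular tournament. The result has $\lfloor\alpha N\rfloor$ vertices and minimum out-degree at least $(M-1)/2-1\ge s$, and its $N$-vertex subtournaments still have minimum out-degree below $(N-1)/2-\gamma\sqrt N$. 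Since $(N-1)/2\le s/\alpha+O_\alpha(1)$, we get
\[
d_T(\alpha,s)\le s/\alpha-a_\alpha\sqrt s
\]
for a suitable $a_\alpha>0$, using that $d_T$ is monotone in the minimum out-degree, i.e.\ a host with minimum out-degree exceeding $s$ still counts.
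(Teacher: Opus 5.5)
Your proposal is correct and follows the paper's proof essentially step for step. The shared ingredients are the vertex-deletion bound $F_m\le A^m2^{\binom m2}m^{-(m-2)/2}$, the four-factor count (degree vectors, $T[X]$, cross arcs, $T[Y]$), the comparison with McKay, the union bound over $N$-sets, and the parity fix by deleting one vertex, with $N$ minimal such that $\lfloor\alpha N\rfloor\ge 2s+2$.

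There is one slip in exactly the step you flagged as the main obstacle. In fact
\[
\frac{M^{M/2}}{N^{N/2}Q^{Q/2}}=\Bigl(\frac MN\Bigr)^{N/2}\Bigl(\frac MQ\Bigr)^{Q/2}=C_\alpha^{N},
\]
with no factor $N^{N/2}$. It is $M^{M/2}/Q^{Q/2}$ that equals $C_\alpha^{N}N^{N/2}$. Taken literally, your identity would leave the bound $C_\alpha^N\gamma^N N^{N/2}$, which is useless. With the correct identity, all powers of $N$ cancel and you get $\mathrm{poly}(N)\,(C_\alpha\gamma)^N$, as you claimed.
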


We begin with a uniform bound on the number of tournaments having a prescribed out-degree vector. For a labelled tournament $T$ on $[m]$, set $\sigma(T):=(d_T^+(i))_{i\in[m]}$. For $\sigma\in\mathbb Z^m$, let $T_m(\sigma)$ be the number of labelled tournaments on $[m]$ with out-degree vector $\sigma$, and set $F_m=\max_\sigma T_m(\sigma)$.

\begin{lemma}\label{lem:score-fibre}
There is an absolute constant $A>0$ such that, for every $m\ge1$,
\[
F_m\le A^m2^{\binom m2}m^{-(m-2)/2}.
\]
\end{lemma}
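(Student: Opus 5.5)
We argue by induction on $m$ via a vertex-deletion recurrence. Let $\sigma\in\mathbb Z^m$ and consider tournaments on $[m]$ with out-degree vector $\sigma$. Delete vertex $m$. Its out-neighbourhood $S\subseteq[m-1]$ has size $\sigma_m$. Once $S$ is chosen, the tournament on $[m-1]$ has out-degree vector $\sigma'$ with $\sigma'_i=\sigma_i-\mathbf 1[i\notin S]$, since $i$ beats $m$ exactly when $i\notin S$. Hence
\[
T_m(\sigma)=\sum_{S\subseteq[m-1],\,|S|=\sigma_m}T_{m-1}(\sigma'_S)\le \binom{m-1}{\sigma_m}F_{m-1}.
\]
This only gives $F_m\le 2^{m-1}F_{m-1}$, which misses the factor $m^{-1/2}$ per step. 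So the naive recurrence must be sharpened by exploiting that most choices of $S$ produce a $\sigma'$ which is not realizable, or is realizable only rarely.

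To get the gain, I would remove a vertex in a way that loses a factor $\sqrt m$. The cleanest route I see is a local-limit argument. Let $\mathbf T$ be a uniformly random tournament on $[m]$. Then $T_m(\sigma)=2^{\binom m2}\Pr(\sigma(\mathbf T)=\sigma)$, so it suffices to show
\[
\Pr(\sigma(\mathbf T)=\sigma)\le A^m m^{-(m-2)/2}.
\]
The vector $\sigma(\mathbf T)$ has $m-1$ free coordinates, because the sum is fixed. Each coordinate is a sum of $m-1$ fair $\pm$ contributions with variance of order $m$. Heuristically the point probability is therefore about $(c/\sqrt m)^{m-1}$, which matches the claim up to $A^m$.

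To prove this rigorously with a uniform constant, I would use Fourier inversion on the torus:
\[
\Pr(\sigma(\mathbf T)=\sigma)=\frac1{(2\pi)^{m}}\int_{[-\pi,\pi]^m}e^{-i\langle\theta,\sigma\rangle}\prod_{j<k}\frac{e^{i\theta_j}+e^{i\theta_k}}{2}\,d\theta.
\]
Bound the modulus by $\prod_{j<k}|\cos((\theta_j-\theta_k)/2)|$. The integrand is invariant under a common shift of all $\theta_j$, so we may reduce to $m-1$ dimensions by fixing $\theta_m$. We then need
\[
\int\prod_{j<k}\bigl|\cos\bigl((\theta_j-\theta_k)/2\bigr)\bigr|\,d\theta_1\cdots d\theta_{m-1}\le A^m m^{-(m-1)/2}.
\]
An alternative is a purely combinatorial route: a weighted recurrence that removes vertices via a Hölder/entropy-type bound, using Bregman-like inequalities on $\sum_S T_{m-1}(\sigma'_S)$. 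This also seems delicate, so I would commit to the Fourier approach.

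The main obstacle is bounding this integral uniformly, including the region far from the diagonal where the Gaussian approximation fails. My plan has three steps.

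\textbf{Step 1.} Use $|\cos x|\le e^{-c\|x\|^2}$, where $\|x\|$ is the distance to $\pi\mathbb Z$. Note that $\theta\mapsto\theta+\pi$ in a single coordinate only flips signs, so the absolute value is $\pi$-periodic in each $\theta_j$. This gives the bound $\exp\bigl(-c\sum_{j<k}\|(\theta_j-\theta_k)/2\|^2\bigr)$.

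\textbf{Step 2.} Split by a reference point. For each configuration there is a point $\phi$ (for instance, the circular mean) such that a constant fraction of the angles lie within distance $\pi/2$ of $\phi$ modulo $\pi$. Writing $u_j=\theta_j-\phi$, the exponent becomes at least $c' m\sum_j\|u_j\|^2$ minus controlled terms. This uses
\[
\sum_{j<k}(u_j-u_k)^2=m\sum_j u_j^2-\Bigl(\sum_j u_j\Bigr)^2.
\]

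\textbf{Step 3.} Integrate. Each of the $m-1$ free coordinates then contributes $O(1/\sqrt m)$, and the union over the choices of $\phi$ and of which coordinates are ``near'' costs at most $C^m$.

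This gives $\Pr\le A^m m^{-(m-1)/2}\le A^m m^{-(m-2)/2}$, as required. If the wrap-around handling proves too messy, I would instead restrict to the event where all the $\|u_j\|$ are small, and show that the complement decays like $e^{-cm}$ per bad coordinate. That still leaves an $O(1)^m$ total, which the factor $A^m$ can absorb.
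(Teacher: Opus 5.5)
The real problem is your dismissal of the deletion recurrence. It does not ``only give'' $F_m\le 2^{m-1}F_{m-1}$. You bounded $\binom{m-1}{\sigma_m}$ by the trivial $2^{m-1}$, but every binomial coefficient is at most the central one, so $\binom{m-1}{\sigma_m}\le\binom{m-1}{\lfloor (m-1)/2\rfloor}\le C\,2^{m-1}/\sqrt m$ with $C$ absolute. That is exactly the missing factor $m^{-1/2}$ per step. Iterating from $F_1=1$ gives $F_m\le C^{m-1}2^{\binom m2}(m!)^{-1/2}$. Then $m!\ge (m/e)^m$ yields $F_m\le A^m2^{\binom m2}m^{-m/2}\le A^m2^{\binom m2}m^{-(m-2)/2}$. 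This is the paper's entire proof, and your first display already contains all of it except this estimate. No local-limit or Fourier analysis is needed. Since polynomial factors are absorbed by $A^m$, the exact exponent ($m/2$, $(m-1)/2$ or $(m-2)/2$) is immaterial.

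The Fourier route can be made to work, but as written its decisive step is asserted rather than proved, and part of the justification is wrong.
\begin{itemize}
\item The modulus is not $\pi$-periodic in each $\theta_j$: shifting one $\theta_j$ by $\pi$ turns $|\cos((\theta_j-\theta_k)/2)|$ into $|\sin((\theta_j-\theta_k)/2)|$. Your Step 1 bound is still valid, but it should be read as $\exp\bigl(-\frac18\sum_{j<k}\rho(\theta_j,\theta_k)^2\bigr)$, with $\rho$ the circular distance on $\mathbb R/2\pi\mathbb Z$.
\item In Step 2, the identity $\sum_{j<k}(u_j-u_k)^2=m\sum_ju_j^2-(\sum_ju_j)^2$ concerns real numbers. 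It does not control circular distances once pairs wrap around.
\item The ``controlled terms'' are never identified.
\item A ``union over the choices of $\phi$'' is meaningless for a continuous parameter unless you discretise, which you have not done.
\end{itemize}

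A short way to close the gap is as follows. Put $S_j=\sum_k\rho(\theta_j,\theta_k)^2$. Then $\sum_{j<k}\rho(\theta_j,\theta_k)^2=\frac12\sum_jS_j\ge\frac m2\min_jS_j$, so the integrand is at most $\sum_{j^*=1}^m\exp(-\frac{m}{16}S_{j^*})$. For fixed $j^*$, the other $m-1$ coordinates integrate independently, each contributing at most $4\sqrt{\pi/m}$. This gives $\Pr(\sigma(\mathbf T)=\sigma)\le A^m\,m\cdot m^{-(m-1)/2}$, and the extra factor $m$ is absorbed into $A^m$. Even with this fix, the route is considerably more work than the binomial estimate above.
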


\begin{proof}
We first prove  that there is an absolute constant $C>0$ such that
\[
        F_m\le C\frac{2^{m-1}}{\sqrt m}F_{m-1}
\]
for every $m\ge2$.

Fix $\sigma=(\sigma_1,\ldots,\sigma_m)$  such that $T_m(\sigma)=F_m$.  Observe that the vertex $m$ has exactly $\sigma_m$
out-neighbours in $[m-1]$.  Let $S$ be the out-neighbourhood of $m$.
Obviously, we have $S\subseteq[m-1]$ and $|S|=\sigma_m$. It is clear that the induced
tournament on $[m-1]$ has out-degree vector $\sigma^S$ with
\[
        \sigma_i^S=
        \begin{cases}
        \sigma_i, & i\in S,\\
        \sigma_i-1, & i\notin S.
        \end{cases}
\]
Therefore, using the standard upper bound for binomial coefficients,
we obtain
\[
        F_m
        \le \sum_{\substack{S\subseteq[m-1]\\ |S|=\sigma_m}}
        T_{m-1}(\sigma^S)
        \le \binom{m-1}{\sigma_m}F_{m-1}
        \le C\frac{2^{m-1}}{\sqrt m}F_{m-1}.
\]
From the fact that $F_1=1$ one can deduce that
\[
        F_m\le C^{m-1}2^{\binom m2}(m!)^{-1/2}.
\]
Since $m!\ge (m/e)^m$, we have $(m!)^{-1/2}\le e^{m/2}m^{-m/2}$.
As a consequence, we are able to find a large $A$ satisfying that
\[
        F_m\le A^m2^{\binom m2}m^{-m/2}
        \le A^m2^{\binom m2}m^{-(m-2)/2},
\]
as desired. \end{proof}

Let $\RT(M)$ denote the number of labelled regular tournaments on $M$ vertices. McKay's asymptotic formula immediately produces the following estimate.

\begin{theorem}[McKay \cite{McKay}]\label{thm:mckay}
There is an absolute constant $B>0$ such that, for every odd integer $M$,
\[
\RT(M)\ge B^{-M}2^{\binom M2}M^{-(M-2)/2}.
\]
\end{theorem}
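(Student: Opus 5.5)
The plan is to read off the bound directly from McKay's asymptotic formula for the number of labelled regular tournaments, and to absorb every lower-order factor into the exponential constant $B^{-M}$. McKay \cite{McKay} proved that, as $M\to\infty$ through odd integers,
\[
\RT(M)=\left(1+o(1)\right)\left(\frac{2^{M+1}}{\pi M}\right)^{(M-1)/2}M^{1/2}e^{-1/2}.
\]
I will take this formula as the input.

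First, rewrite the right-hand side in the shape of the claimed bound. Since $(M+1)(M-1)/2=\binom M2+(M-1)/2$, we get
\[
\left(\frac{2^{M+1}}{\pi M}\right)^{(M-1)/2}M^{1/2}
=2^{\binom M2}\left(\frac{2}{\pi}\right)^{(M-1)/2}M^{-(M-1)/2+1/2}.
\]
The power of $M$ here is exactly $M^{-(M-2)/2}$. Consequently
\[
\frac{\RT(M)}{2^{\binom M2}M^{-(M-2)/2}}=\left(1+o(1)\right)e^{-1/2}\left(\frac{2}{\pi}\right)^{(M-1)/2}.
\]

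Second, bound this ratio below by an exponential. There is an $M_0$ such that the $1+o(1)$ factor is at least $1/2$ for all odd $M\ge M_0$. For such $M$ the ratio is then at least
\[
\tfrac12 e^{-1/2}\left(\frac{2}{\pi}\right)^{M/2}\ge B_1^{-M}
\]
for a suitable absolute constant $B_1$.

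Third, handle the finitely many odd $M<M_0$. For every odd $M$ there is at least one regular tournament, namely the circulant one in which $i\to i+j \pmod M$ for $1\le j\le (M-1)/2$. Hence $\RT(M)\ge 1$, and the ratio is a positive number for each of these finitely many $M$. Choosing $B\ge B_1$ large enough to cover these cases as well gives the inequality for every odd $M$.

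There is no real obstacle here. The only point needing care is the exponent bookkeeping, namely checking that McKay's main term carries exactly the factor $2^{\binom M2}M^{-(M-2)/2}$ and that everything else is at worst exponentially small in $M$.
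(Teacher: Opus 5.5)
Your proposal is correct and takes the same route as the paper. The paper simply says the bound follows immediately from McKay's asymptotic formula $\RT(M)\sim (2^{M+1}/(\pi M))^{(M-1)/2}M^{1/2}e^{-1/2}$. Your exponent bookkeeping, showing the ratio to $2^{\binom M2}M^{-(M-2)/2}$ is $(1+o(1))e^{-1/2}(2/\pi)^{(M-1)/2}$, and your treatment of the finitely many small odd $M$ via the circulant tournament giving $\RT(M)\ge 1$, supply exactly the details the paper leaves implicit.
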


The following lemma estimates the probability that a fixed vertex set induces an almost regular subtournament of a random regular tournament.

\begin{lemma}\label{lem:fixed-set}
Fix $\alpha>1$. There is a constant $C_{\alpha}>0$ such that the following holds. Let $M$ be odd, let $|M-\alpha N|\le 1$, and let $R$ be a uniformly random labelled regular tournament on $M$ vertices. For every fixed $N$-set $X\subseteq V(R)$ and every fixed $\gamma>0$,
\[
\Pr\left(\delta^+(R[X])\ge \frac{N-1}{2}-\gamma\sqrt N\right)\le N(C_{\alpha}\cdot\gamma)^N
\]
for all sufficiently large $N$.
\end{lemma}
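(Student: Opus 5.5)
We bound the number of labelled regular tournaments $R$ on $V=V(R)$ for which $\delta^+(R[X])\ge h:=\frac{N-1}{2}-\gamma\sqrt N$, and divide by $\RT(M)$ from Theorem~\ref{thm:mckay}. Write $Y=V\setminus X$ and $Q=|Y|=M-N=(\alpha-1)N+O(1)$. Every such $R$ splits into three pieces: $R[X]$, the orientation of the $NQ$ cross edges, and $R[Y]$. We count these pieces in order, conditioning on the out-degree vector $\mathbf a=(a_x)_{x\in X}$ of $R[X]$.

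\emph{Step 1 (degree vectors).} We have $a_x\ge h$ for every $x$ and $\sum_x a_x=\binom N2$, which equals $N\cdot\frac{N-1}{2}$. Put $b_x=a_x-\lceil h\rceil\ge 0$. Then $\sum_x b_x\le N(\gamma\sqrt N+1)$, so the number of admissible vectors is at most
\[
\binom{N(\gamma\sqrt N+2)}{N}\le \bigl(e(\gamma\sqrt N+2)\bigr)^N.
\]
For $N$ large relative to $\gamma$ this is at most $(C\gamma\sqrt N)^N$. (If $\gamma\sqrt N$ is small the bound is trivial anyway, and we only need large $N$.)

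\emph{Step 2 (fibres).} For a fixed $\mathbf a$, Lemma~\ref{lem:score-fibre} gives at most $F_N$ choices for $R[X]$. Regularity forces each $x\in X$ to send exactly $\frac{M-1}{2}-a_x$ edges into $Y$. Hence the cross edges are chosen by picking independently a subset of $Y$ of prescribed size for each $x$, giving at most $\prod_x\binom{Q}{\cdot}\le (C'2^{Q}/\sqrt Q)^N\le C_\alpha'^{\,N}2^{NQ}N^{-N/2}$ choices. With $R[X]$ and the cross edges fixed, the out-degree vector of $R[Y]$ is determined, so there are at most $F_Q$ choices for $R[Y]$. The total count is therefore at most
\[
(C\gamma\sqrt N)^N\,F_N\,C_\alpha'^{\,N}2^{NQ}N^{-N/2}F_Q .
\]

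\emph{Step 3 (comparison).} By Lemma~\ref{lem:score-fibre}, $F_NF_Q\le A^{M}2^{\binom N2+\binom Q2}N^{-(N-2)/2}Q^{-(Q-2)/2}$. Since $\binom N2+NQ+\binom Q2=\binom M2$, the powers of $2$ cancel against Theorem~\ref{thm:mckay}, and the probability is at most
\[
(C\gamma)^N (AB)^M C_\alpha'^{\,N}\;N^{N/2}N^{-N/2}N^{-(N-2)/2}Q^{-(Q-2)/2}M^{(M-2)/2}.
\]
The exponential factors are $C_\alpha^N$-type because $M=O_\alpha(N)$. The main obstacle is the polynomial bookkeeping, namely checking that $M^{(M-2)/2}N^{-(N-2)/2}Q^{-(Q-2)/2}N^{-N/2}\cdot N^{N/2}$ reduces to at most $N\cdot c_\alpha^N$. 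Writing $M=N+Q$, $M^{(M-2)/2}=M^{(N-2)/2}M^{Q/2}$, the expression becomes
\[
(M/N)^{(N-2)/2}(M/Q)^{Q/2}\,Q\,M^{-1}\cdot N^{0}.
\]
Here $(M/N)^{N/2}\le \alpha'^{N}$. Also $(M/Q)^{Q/2}=(1+N/Q)^{Q/2}\le e^{N/2}$. Finally $Q/M\le 1$, and the leftover $N/M$ factor is harmless. This leaves an extra factor of at most $O(N)$, which gives the stated $N(C_\alpha\gamma)^N$ after all constants depending on $\alpha$ are absorbed into $C_\alpha$. The $\sqrt N^N$ gained in Step 1 exactly cancels the $N^{-N/2}$ from the cross-edge binomials. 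What makes the bound work is that the $X$-side fibre, $F_N$, costs a full $N^{-(N-2)/2}$ without any compensating factor.
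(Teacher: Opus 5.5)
Your proposal is correct and follows the paper's proof essentially step for step. You bound the admissible out-degree vectors of $R[X]$ by $(C\gamma\sqrt N)^N$, bound each fibre by $F_N\cdot C^N2^{NQ}Q^{-N/2}\cdot F_Q$ using the forced cross-degrees, and cancel the powers of $2$ against McKay's bound via $\binom N2+NQ+\binom Q2=\binom M2$; the only slip is that the polynomial factor is exactly $(M/N)^{(N-2)/2}(M/Q)^{Q/2}\,Q$ with no extra $M^{-1}$, but this is still $C_\alpha^N\cdot O(N)$, so your final bound $N(C_\alpha\gamma)^N$ is unaffected.
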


\begin{proof}
Set $Y=V(R)\setminus X$ and $Q=|Y|$. Let $h=(N-1)/2-\gamma\sqrt N$ and suppose that $\delta^+(R[X])\ge h$. If $a=(a_i)_{i\in X}$ is the out-degree vector of $R[X]$, then $a_i\ge h$ and $\sum_{i\in X}a_i=\binom N2$. Write $z_i$ for $a_i-\lceil h\rceil$. Note that $z_i\ge0$ and
\[
\sum_{i\in X}z_i=\binom N2-N\lceil h\rceil\le \gamma N^{3/2}.
\]
Hence, for sufficiently large $N$, the number of possible out-degree vectors $a$ is at most
\[
\binom{\lfloor\gamma N^{3/2}\rfloor+N}{N}\le (C_1\gamma\sqrt N)^N,
\]
 where $C_1$ is an absolute constant.

Fix such a vector $a$. There are at most $F_N$ choices for $R[X]$. For each $i\in X$, regularity forces $i$ to have exactly $(M-1)/2-a_i$ out-neighbours in $Y$. Thus, the number of possible orientations between $X$ and $Y$ is at most
\[
\prod_{i\in X}\binom{Q}{(M-1)/2-a_i}\le \binom{Q}{\lfloor Q/2\rfloor}^{N}\le C_2^N2^{NQ}Q^{-N/2}.
\]
Once $R[X]$ and the cross-edges are fixed, the out-degree vector of $R[Y]$ is determined, so there are at most $F_Q$ choices for $R[Y]$. By  Lemma~\ref{lem:score-fibre}, together with $\binom N2+NQ+\binom Q2=\binom M2$ as well as $Q=\Theta_{\alpha}(N)$, the number of regular tournaments satisfying the fixed vector $a$ is at most
\[
F_NF_QC_2^N2^{NQ}Q^{-N/2}\le C_{\alpha}^N2^{\binom M2}N^{-(M+N-4)/2}.
\]
 On the other hand, Theorem~\ref{thm:mckay} and $M=\alpha N+O_{\alpha}(1)$ gives
\[
\RT(M)\ge C_{\alpha}^{-N}2^{\binom M2}N^{-(M-2)/2}.
\]
Therefore, the probability that $R[X]$ has out-degree vector $a$ is
at most $C_{\alpha}^NN^{-(N-2)/2}$. Summing over all possible vectors $a$, we conclude that there exists a $C_{\alpha}$ such that
\[
\Pr\left(\delta^+(R[X])\ge h\right)\le (C_1\gamma\sqrt N)^NC_{\alpha}^NN^{-(N-2)/2}\le N(C_{\alpha}\gamma)^N,
\]
as desired.
\end{proof}

\begin{proof}[Proof of Proposition~\ref{thm:construction}]
Let $C_\alpha$ be the constant in Lemma~\ref{lem:fixed-set}. Choose $K_\alpha>0$ such that $\binom MN\le K_\alpha^N$ whenever $|M-\alpha N|\le1$ and $N$ is sufficiently large, and then choose $\gamma>0$ with $K_\alpha C_\alpha\gamma<1$.

Let $s$ be sufficiently large, and choose $N$ as small as possible such that $m:=\lfloor\alpha N\rfloor\ge2s+2$. Then $m=2s+O_\alpha(1)$ and $N=2s/\alpha+O_\alpha(1)$. Choose $\varepsilon\in\{0,1\}$ such that $M:=m+\varepsilon$ is an odd integer. Notice that $|M-\alpha N|\le1$.

Let $R$ be a uniformly random regular tournament on $M$ vertices. By Lemma~\ref{lem:fixed-set}, we get
\[
\begin{aligned}
\Pr\left(\exists X\subseteq V(R),\ |X|=N,\
\delta^+(R[X])\ge \frac{N-1}{2}-\gamma\sqrt N\right)\le N(K_\alpha C_\alpha\gamma)^N=o(1).
\end{aligned}
\]
Consequently, there is a regular tournament $R$ in which every $N$-set $X$ satisfies  $\delta^+(R[X])<(N-1)/2-\gamma\sqrt N$.

If $\varepsilon=0$, then we let $D=R$. If $\varepsilon=1$, then we let $D$ be the subtournament obtained from $R$ by deleting one vertex. In both cases, $|V(D)|=m=\lfloor\alpha N\rfloor$. If $\varepsilon=0$, then $m$ is odd and $\delta^+(D)=(m-1)/2\ge s$. If $\varepsilon=1$, then every vertex of $R$ has out-degree $m/2$, and hence $\delta^+(D)\ge m/2-1\ge s$ since deleting one vertex decreases each remaining out-degree by at
most one.

For every $N$-set $X\subseteq V(D)$, we have $D[X]=R[X]$.
Consequently,
$
\delta^+(D[X])
<
\frac{N-1}{2}-\gamma\sqrt N.
$ Since $N=2s/\alpha+O_\alpha(1)$, there is some $a_\alpha>0$ such that
\[
\frac{N-1}{2}-\gamma\sqrt N\le \frac{s}{\alpha}-a_\alpha\sqrt s
\]
for all sufficiently large $s$\;(for example, one may take $a_\alpha=(\gamma/4)\sqrt{2/\alpha}$). In summary, $D$ contains no subtournament on $N$ vertices of minimum out-degree at least $s/\alpha-a_\alpha\sqrt s$, completing the proof of the proposition.
\end{proof}

\subsection{The upper bound: an out-splitting theorem}\label{sec:deficit-upper}
We prove the following result.
\begin{proposition}\label{thm:deficit-bound}
For every fixed real number $\alpha>1$, there exists $s_0(\alpha)$ such that, for every $s\ge s_0(\alpha)$,
\[
d_T(\alpha,s)\ge \frac{s}{\alpha}-B\sqrt{s},
\]
where $B$ is the absolute constant from Lemma~\ref{thm:out-splitting}.
\end{proposition}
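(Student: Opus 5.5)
We derive the bound from the out-splitting statement (Lemma~\ref{thm:out-splitting}). By the overview, it says: if $\mathcal A=(A_1,\ldots,A_m)$ is a family of non-empty subsets of an $M$-set $\Omega$ with $|\{i:|A_i|\le t\}|\le 3t$ for all $t\ge1$, then for every $q\in\{0,\ldots,M\}$ there is $X\subseteq\Omega$ with $|X|=q$ and $\bigl||X\cap A_i|-\frac qM|A_i|\bigr|\le B\sqrt{|A_i|}$ for all $i$. The plan is to apply this to the out-neighbourhoods of a tournament and then check that the rounding losses are absorbed by $B\sqrt s$.

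Fix $n$ and a tournament $T$ on $M=\lfloor\alpha n\rfloor$ vertices with $\delta^+(T)\ge s$. Since $s\ge1$, every out-neighbourhood $N_T^+(v)$ is non-empty. Take the family $A_v=N_T^+(v)$ for $v\in V(T)$. To verify the growth condition, let $k$ be the number of vertices of out-degree at most $t$. They induce a subtournament with $\binom k2$ arcs, so some vertex in it has out-degree at least $(k-1)/2$. Hence $(k-1)/2\le t$, that is, $k\le 2t+1\le 3t$ for $t\ge1$. Apply the lemma with $q=n$; this is legitimate since $n\le M$ because $\alpha>1$. We obtain $X$ with $|X|=n$ such that, for every $v\in X$,
\[
d^+_{T[X]}(v)=|N_T^+(v)\cap X|\ge \frac nM d_T^+(v)-B\sqrt{d_T^+(v)}.
\]

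It remains to lower-bound $f(d)=\frac nM d-B\sqrt d$ over $d\ge s$. The function $f$ is increasing once $\sqrt d\ge BM/(2n)$. Since $M/n\le\alpha$, this holds for all $d\ge s$ as soon as $s\ge (B\alpha/2)^2$. So $d^+_{T[X]}(v)\ge \frac nM s-B\sqrt s$. Because $M\le\alpha n$, we have $\frac nM\ge\frac1\alpha$, which gives $\delta^+(T[X])\ge s/\alpha-B\sqrt s$. Taking $s_0(\alpha)=\lceil (B\alpha/2)^2\rceil$ (or larger) handles this. Since $d_T(\alpha,s)$ is an integer and the bound holds for every $n$ and every such $T$, the proposition follows; if we only need the integer part, rounding down changes nothing.

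The main obstacle is the out-splitting lemma itself, which the proposition just assumes. It needs the iterated Lovett--Meka partial colouring that preserves the coordinate sum (to get $|X|=q$ exactly) while keeping discrepancy $O(\sqrt{|A_i|})$. That in turn uses the growth condition to make the entropy/measure budget summable over sets of all sizes. Given that lemma, the only delicate points here are the monotonicity of $f$ (a large-$s$ requirement) and the direction of the inequality $n/M\ge 1/\alpha$, both of which are straightforward.
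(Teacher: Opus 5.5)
Your proof is correct and follows the paper's argument: apply the out-splitting lemma with $q=n$ to $T$ on $M=\lfloor\alpha n\rfloor$ vertices, use that $x\mapsto \frac nM x-B\sqrt x$ is increasing on $[s,\infty)$ once $s\ge (B\alpha/2)^2$, and finish with $n/M\ge 1/\alpha$. The only cosmetic difference is that the set-family statement you quote is really Lemma~\ref{thm:rounding}, so your inline check of the growth condition $|\{v:d^+_T(v)\le t\}|\le 2t+1\le 3t$ just reproduces the proof of Lemma~\ref{thm:out-splitting}.
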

We first record the following exact version of the Lovett--Meka theorem
\cite[Theorem~2.1]{LM}, which follows from their approximate statement
by a compactness argument.

\begin{lemma}[Lovett--Meka]\label{lem:LM}
Let $N\ge1$ and $m\ge0$ be integers. Let
$\mathbf{u}_0,\ldots,\mathbf{u}_m$ be vectors in $\mathbb R^N$, let
$\mathbf{z}\in[-1,1]^N$, and let $c_0,\ldots,c_m$ be nonnegative real
numbers satisfying that
\[
\sum_{i=0}^me^{-c_i^2/16}\le N/16.
\]
Then there exists $\mathbf{z}'\in[-1,1]^N$ such that at least
$\lceil N/2\rceil$ coordinates of $\mathbf{z}'$ belong to
$\{-1,1\}$, and for every $0\le i\le m$,
\[
\left|\left\langle
\mathbf{z}'-\mathbf{z},\mathbf{u}_i
\right\rangle\right|
\le c_i\lVert\mathbf{u}_i\rVert_2.
\]
\end{lemma}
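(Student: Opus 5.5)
This is the exact (limiting) form of \cite[Theorem~2.1]{LM}, and I plan to obtain it from their approximate statement by a compactness argument. I will use their theorem in the following form. Under the hypotheses of Lemma~\ref{lem:LM}, for every $\delta>0$ there exists $\mathbf z_\delta\in[-1,1]^N$ with two properties. First, $|\langle\mathbf z_\delta-\mathbf z,\mathbf u_i\rangle|\le c_i\lVert\mathbf u_i\rVert_2$ for all $0\le i\le m$. Second, $|(\mathbf z_\delta)_j|\ge1-\delta$ for at least $N/2$ indices $j$. Since the number of such indices is an integer, "at least $N/2$" is the same as "at least $\lceil N/2\rceil$". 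Their theorem produces such a point with positive probability, so in particular it exists.

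Apply this with $\delta=1/k$ for each $k\ge1$ to get points $\mathbf z_k\in[-1,1]^N$. For each $k$, fix a set $I_k\subseteq[N]$ with $|I_k|=\lceil N/2\rceil$ such that $|(\mathbf z_k)_j|\ge1-1/k$ for all $j\in I_k$. There are only finitely many subsets of $[N]$, so some set $I$ equals $I_k$ for infinitely many $k$; pass to that subsequence. On this subsequence, also fix for each $j\in I$ the sign of $(\mathbf z_k)_j$ by passing to a further subsequence, again using finiteness. The cube $[-1,1]^N$ is compact, so a further subsequence converges to some $\mathbf z'\in[-1,1]^N$.

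Next I will check that $\mathbf z'$ has the required properties.
\begin{enumerate}
\item[(i)] Each map $\mathbf w\mapsto|\langle\mathbf w-\mathbf z,\mathbf u_i\rangle|$ is continuous, and the inequality $\le c_i\lVert\mathbf u_i\rVert_2$ is a closed condition. Hence every discrepancy bound passes to the limit.
\item[(ii)] For $j\in I$ we have $1-1/k\le|(\mathbf z_k)_j|\le1$, with $k\to\infty$ along the subsequence. So $|z'_j|=1$, that is, $z'_j\in\{-1,1\}$.
\end{enumerate}
This gives at least $|I|=\lceil N/2\rceil$ coordinates of $\mathbf z'$ in $\{-1,1\}$, as required. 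Fixing the signs in the previous step is not needed for (ii), but it makes the limit value of each coordinate in $I$ explicit.

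The only point needing care is making sure the cited statement really matches these hypotheses. The normalisation $\sum_i e^{-c_i^2/16}\le N/16$, the requirement $c_i\ge0$, and the fact that the starting point $\mathbf z$ may lie anywhere in $[-1,1]^N$ (not just in the open cube) should be checked against \cite{LM}. Degenerate cases are harmless. If some $\mathbf u_i=0$, its constraint is vacuous. If $m=0$, the hypothesis reduces to the single term $e^{-c_0^2/16}\le N/16$. I expect no further obstacle: the argument is purely the finiteness of the choices of $I$ together with compactness of the cube and closedness of all the constraints.
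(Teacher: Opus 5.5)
Your proposal is correct and is essentially the paper's own argument: apply the approximate Lovett--Meka theorem with $\delta=1/k$, pass to a subsequence on which the index set $I_k$ of size $\lceil N/2\rceil$ is constant, and extract a convergent subsequence in the compact cube $[-1,1]^N$. Continuity then carries both the discrepancy bounds and $|z'_j|=1$ for $j\in I$ to the limit, and your extra sign-fixing step is harmless but, as you note yourself, unnecessary.
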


\begin{proof}
The initial version of Lovett and Meka gives an approximate form of the 
statement. Specifically, they verified that for every $\delta>0$, there is a
vector $\mathbf{z}'\in[-1,1]^N$ satisfying the same discrepancy
inequalities, and at least $\lceil N/2\rceil$ coordinates satisfy that
$
|z'_j|\ge1-\delta.
$

For each positive integer $k$, we employ their result by taking
$\delta=1/k$. This produces a vector
$\mathbf{z}^{(k)}\in[-1,1]^N$ such that, for every $0\le i\le m$,
\[
\left|\left\langle
\mathbf{z}^{(k)}-\mathbf{z},\mathbf{u}_i
\right\rangle\right|
\le c_i\lVert\mathbf{u}_i\rVert_2, 
\]
 and there are at least
$\lceil N/2\rceil$ coordinates satisfying
\[
|z_j^{(k)}|\ge1-\frac1k.
\]

Pick a set $I_k\subseteq[N]$ of size
$\lceil N/2\rceil$ which consists of such coordinates. Since there are only
finitely many subsets of $[N]$ of this size, by passing to a subsequence
we may assume that $I_k=I$ is independent of $k$.

Since $[-1,1]^N$ is compact, we may pass to a further subsequence such
that $\mathbf{z}^{(k)}$ converges to some
$\mathbf{z}'\in[-1,1]^N$. For every $j\in I$, we have
$|z_j^{(k)}|\ge1-1/k$, and hence $|z'_j|=1$. Thus at least
$\lceil N/2\rceil$ coordinates of $\mathbf{z}'$ belong to
$\{-1,1\}$.
By the continuity of the inner product, we conclude that, for every $0\le i\le m$,
\[
\left|\left\langle
\mathbf{z}'-\mathbf{z},\mathbf{u}_i
\right\rangle\right|
\le c_i\lVert\mathbf{u}_i\rVert_2.
\]

The lemma thus follows.
\end{proof}

We then present the following prescribed-size rounding
lemma, which says that for a family containing only linearly many sets of each bounded size, one can choose a set of any prescribed size while controlling all intersections up to a square-root error.
\begin{lemma}\label{thm:rounding}
There exists an absolute constant $B>0$ such that the following holds. Let $M$ and $m$ be positive integers, let $\Omega$ be a set of size $M$, and let
$
\mathcal A=(A_1,\ldots,A_m)
$
be an indexed family of non-empty subsets of $\Omega$ satisfying
$
\bigl|\{i\in[m]:|A_i|\le t\}\bigr|\le 3t
$
for every integer $t$ with $1\le t\le M$. Then, for every integer $q$ with $0\le q\le M$, there exists a set $X\subseteq\Omega$ such that $|X|=q$, and for every $i\in[m]$,
$$
\left||X\cap A_i|-\frac{q}{M}|A_i|\right|
\le B\sqrt{|A_i|}.
$$

\end{lemma}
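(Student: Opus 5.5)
We use the scheme from the overview: iterate Lemma~\ref{lem:LM} on a fractional vector, preserving the coordinate sum exactly. Encode a set by $\mathbf z\in[-1,1]^\Omega$. Start from the constant vector with every coordinate equal to $2q/M-1$, so that $\langle \mathbf z,\mathbf 1\rangle = 2q-M$ and
\[
\langle \mathbf z,\mathbf 1_{A_i}\rangle = \Bigl(\frac{2q}{M}-1\Bigr)|A_i|.
\]
If $\mathbf z^*\in\{-1,1\}^\Omega$ and $X=\{x:z^*_x=1\}$, then $|X\cap A_i|-\frac qM|A_i|=\frac12\langle \mathbf z^*-\mathbf z,\mathbf 1_{A_i}\rangle$. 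It therefore suffices to reach a $\pm1$ vector with coordinate sum exactly $2q-M$ and
\[
|\langle \mathbf z^*-\mathbf z,\mathbf 1_{A_i}\rangle|=O(\sqrt{|A_i|}).
\]

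At round $r$ let $U_r$ be the set of unfixed coordinates, with $|U_r|=n_r\le M2^{-r}$. Apply Lemma~\ref{lem:LM} in $\mathbb R^{U_r}$ with the following constraints.
\begin{itemize}
\item $\mathbf u_0=\mathbf 1_{U_r}$ with $c_0=0$. This keeps the sum exactly, since fixed coordinates never change again.
\item For each $i$, the vector $\mathbf u_i=\mathbf 1_{A_i\cap U_r}$. Since $\|\mathbf u_i\|_2\le\sqrt{\min(|A_i|,n_r)}$, a change of $c_i\|\mathbf u_i\|_2$ is acceptable if $c_i$ is chosen so that these increments sum geometrically.
\end{itemize}
Concretely, when $|A_i|\le n_r$ set $c_i=\lambda\sqrt{\log(C n_r/|A_i|)}$, and when $|A_i|>n_r$ set
\[
c_i=\lambda\bigl(\tfrac{|A_i|}{n_r}\bigr)^{1/4}.
\]
Then the total error on $A_i$ over all rounds is
\[
\sum_{r:\,n_r\ge|A_i|}\sqrt{|A_i|}\,\sqrt{\log(Cn_r/|A_i|)}+\sum_{r:\,n_r<|A_i|}\sqrt{n_r}\,(|A_i|/n_r)^{1/4}.
\]
Both sums are geometric in $r$, because the $n_r$ roughly halve. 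This gives $O(\sqrt{|A_i|})$. Sets with $A_i\cap U_r=\emptyset$ need no constraint.

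The main obstacle is verifying the Lovett--Meka condition $\sum_i e^{-c_i^2/16}\le n_r/16$ at every round. Here the growth hypothesis $|\{i:|A_i|\le t\}|\le 3t$ is essential: it bounds the number of sets in each dyadic size class $[2^j,2^{j+1})$ by $3\cdot2^{j+1}$. For large sets, with $|A_i|\approx 2^j n_r$ and $j\ge0$, the contribution of class $j$ is
\[
O(2^jn_r)\,e^{-\lambda^2 2^{j/2}/16},
\]
which sums to at most $n_r/64$ once $\lambda$ is a large absolute constant. For small sets, with $|A_i|\approx 2^{-j}n_r$, class $j$ contributes $O(2^{-j}n_r)(C2^{j})^{-\lambda^2/16}$, again summable to at most $n_r/64$. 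The term $e^{0}=1$ from $c_0=0$ is absorbed as long as $n_r\ge n_0$ for some absolute constant $n_0$. I would also make sure $n_r$ is measured after each round, so that the bound $n_{r+1}\le n_r/2$ drives the geometric sums.

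Once $n_r<n_0$, round the remaining coordinates directly. They are fractional, but their sum $S$ satisfies $S\equiv 2q-M-(\text{sum of fixed})$, which forces it to be an integer of the right parity. Replace them by $\pm1$ values with the same sum, which is possible since $|S|\le n_r$ and the parity matches. This changes each $\langle\cdot,\mathbf 1_{A_i}\rangle$ by at most $2n_0=O(1)\le O(\sqrt{|A_i|})$, using $|A_i|\ge1$.

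Finally set $X=\{x:z^*_x=1\}$. Its size is exactly $q$, and the accumulated error gives the stated bound with an absolute constant $B$. The edge cases $q=0$ and $q=M$ are trivial.
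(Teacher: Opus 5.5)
Your overall scheme matches the paper's: iterate Lemma~\ref{lem:LM} with $\mathbf u_0=\mathbf 1_{U_r}$ and $c_0=0$, check the budget by dyadic classes via the growth condition, and finish with a parity-correct rounding of the last $O(1)$ coordinates. Your budget verification and your treatment of the rounds with $|A_i|>n_r$ are fine.

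The gap is in the error accounting for the rounds with $n_r\ge |A_i|$. Your claim that both sums are geometric is false for the first sum. These are the \emph{early} rounds, and you use $c_i=\lambda\sqrt{\log(Cn_r/|A_i|)}>0$ there. So each such round may move $\langle \mathbf z,\mathbf 1_{A_i}\rangle$ by up to $\lambda\sqrt{|A_i|}\sqrt{\log(Cn_r/|A_i|)}$. These terms grow with $n_r$ and never drop below $\lambda\sqrt{|A_i|\log C}$. If $n_r$ merely halves each round, there are about $L=\log_2(M/|A_i|)$ such rounds, so
\[
\sum_{r:\,n_r\ge |A_i|}\lambda\sqrt{|A_i|\log(Cn_r/|A_i|)}\approx\lambda\sqrt{|A_i|}\sum_{k=0}^{L}\sqrt{\log C+k\log 2}\asymp \sqrt{|A_i|}\,\log^{3/2}(M/|A_i|).
\]
Lemma~\ref{lem:LM} gives only a worst-case bound per round, so nothing stops these changes from accumulating in one direction. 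Your parameters therefore yield only $O\bigl(\sqrt{|A_i|}\log^{3/2}(M/|A_i|)\bigr)$, which is not bounded by $B\sqrt{|A_i|}$ uniformly in $M$. This matters downstream: in the tournament application $M$ is arbitrarily large compared with $s$.

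The missing idea is to use the growth condition to switch small sets off entirely, not just to pay for positive $c_i$'s. At most $3t$ sets have size at most $t$. So you can set $c_i=0$ for every set with $|A_i|\le \varepsilon n_r$, say $\varepsilon=1/100$, at a budget cost of at most $3n_r/100<n_r/32$. Keep your choices for $|A_i|>\varepsilon n_r$. Then a set incurs no error until $n_r<|A_i|/\varepsilon$. After that, the $O(1)$ rounds with $|A_i|\le n_r<|A_i|/\varepsilon$ cost $O(\sqrt{|A_i|})$ each, and the rounds with $n_r<|A_i|$ form your geometric tail.

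This is exactly how the paper proceeds. It orders the sets by size and puts $c_i=0$ for the $r=\lfloor |W|/64\rfloor$ smallest ones. It then uses $i\le 3a_i$ to deduce that any set with $c_i>0$ satisfies $|W|<192a_i$. Only the late rounds contribute, and their contributions $\sqrt{\min\{a_i,|W|\}\log(Ca_i/|W|)}$ sum to $O(\sqrt{a_i})$.
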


\begin{proof}
Order the sets so that
$
a_1:=|A_1|\le\cdots\le a_m:=|A_m|.
$
The growth assumption implies
$
i\le3a_i
$
for each $i\in[m]$. Set $p=q/M$. The cases $q=0$ and $q=M$ are immediate, so we can assume $0<q<M$ and start from
$
\mathbf{z}^{(0)}=(2p-1)\mathbf{1}_\Omega.
$
Its coordinate sum is $2q-M$. We call a coordinate fixed once it takes a value in $\{-1,1\}$.

Suppose that the current vector is $\mathbf{z}\in[-1,1]^\Omega$, and let $W$ be the set of coordinates not yet in $\{-1,1\}$. If $|W|\ge64$, then we let
$
r=\lfloor |W|/64\rfloor
$
and $c_0=0$, and for every $i\in[m]$ we set
\[
c_i=
\begin{cases}
0,&i\le r,\\
4\sqrt{2\log(i/r)},&i>r.
\end{cases}
\]
By the definition of the $c_i$, we have
\[
\sum_{i=0}^me^{-c_i^2/16}
\le1+r+\sum_{i>r}(r/i)^2
\le1+2r
\le |W|/16.
\]
By applying Lemma~\ref{lem:LM} to the coordinates in $W$ with
\[
\mathbf{u}_0=\mathbf{1}_W
\quad\text{and}\quad
\mathbf{u}_i=\mathbf{1}_{A_i\cap W},
\]
 we can obtain that
$
\langle \mathbf{z}'-\mathbf{z},\mathbf{1}_W\rangle=0
$ as $c_0=0$ and $\mathbf{u}_0=\mathbf{1}_W$.
Hence, 
$
\sum_{x\in W}z'_x=\sum_{x\in W}z_x,
$
which means that the sum of the coordinates in $W$ is preserved. Meanwhile, at least half of the coordinates in $W$ become fixed in $\{-1,1\}$. We then remove these newly fixed coordinates from $W$ and apply Lemma~\ref{lem:LM} again to the remaining unfixed coordinates. Thus, if $n_j$ denotes the number of unfixed coordinates after the $j$th step, then
$
n_{j+1}\le \frac{n_j}{2}.
$
Since the coordinates that have already been fixed are never changed, and the sum over the unfixed coordinates is preserved at every step, the sum of all coordinates remains equal to its initial value $2q-M$ throughout the iteration.

We terminate the process when fewer than $64$ coordinates remain
unfixed. Let $F$ be the set of fixed coordinates. Then $|\Omega\setminus F|<64$. Since the sum over the
unfixed coordinates is preserved throughout the iteration,
$
R:=2q-M-\sum_{x\in F}z_x
   =\sum_{x\in \Omega\setminus F}z_x.
$
In particular, $|R|\le |\Omega\setminus F|$. Moreover,
$
R
\equiv M-|F|
=|\Omega\setminus F|
\pmod 2.
$
Thus $(|\Omega\setminus F|+R)/2$ is an integer between $0$ and $|\Omega\setminus F|$. We may
therefore assign the value $1$ to exactly $(|\Omega\setminus F|+R)/2$ coordinates
of $\Omega\setminus F$ and the value $-1$ to the remaining coordinates. This gives
a vector $\mathbf z^*\in\{-1,1\}^{\Omega}$ whose total coordinate sum
is $2q-M$.

\begin{claim}\label{clm:rounding-error}
For every $i\in[m]$, the sum of the absolute changes in
\[
\sum_{x\in A_i}z_x
\]
during the Lovett--Meka iterations is $O(\sqrt{a_i})$.
\end{claim}

\begin{proof}[Proof of the Claim]
Fix $i\in[m]$ and consider a stage at which $W$ is the set of unfixed
coordinates. If $c_i=0$, then Lemma~\ref{lem:LM} gives
$
\left|
\left\langle
\mathbf{z}'-\mathbf{z},\mathbf{1}_{A_i\cap W}
\right\rangle
\right|
\le
c_i\left\|\mathbf{1}_{A_i\cap W}\right\|_2
=0,
$
so this stage makes no contribution.

Suppose now that $c_i>0$. Then
$
3a_i\ge i>r=\lfloor |W|/64\rfloor,
$
and hence $|W|<192a_i$. Moreover, for some absolute constant $C>0$,
\[
c_i
=
4\sqrt{2\log\left(\frac{i}{r}\right)}
\le
C\sqrt{\log\left(\frac{Ca_i}{|W|}\right)}.
\]
Thus the absolute change at this stage is at most
\begin{align*}
c_i\left\|\mathbf{1}_{A_i\cap W}\right\|_2
&=
c_i\sqrt{|A_i\cap W|}\\
&\le
C\sqrt{\min\{a_i,|W|\}
\log\left(\frac{Ca_i}{|W|}\right)}.
\end{align*}

If $a_i\le |W|<192a_i$, this is $O(\sqrt{a_i})$. Since the
number of unfixed coordinates is at least halved at each step, there are
at most $8$ such stages.

For the remaining stages, $|W|<a_i$. If
$
2^{-j-1}a_i\le |W|<2^{-j}a_i
$
for some $j\ge0$, then the contribution of this stage is at most
$
C\sqrt{a_i}\,2^{-j/2}\sqrt{j+1}.
$
There is at most one such stage for each $j$, and
\[
\sum_{j\ge0}2^{-j/2}\sqrt{j+1}<\infty.
\]
Summing over all stages proves the claim.

 The final assignment of values to the fewer than $64$ remaining
coordinates changes the sum over $A_i$ by only $O(1)$. Hence
$
\left|
\sum_{x\in A_i}(z_x^*-z_x^{(0)})
\right|
\le C\sqrt{a_i}
$
for some absolute constant $C$. \end{proof}

Set
$
X=\{x\in\Omega:z_x^*=1\}.
$
As the total coordinate sum is $2q-M$, we get $|X|=q$, and
$
2\left(|X\cap A_i|-pa_i\right)
=
\sum_{x\in A_i}(z_x^*-z_x^{(0)}).
$ From Claim~\ref{clm:rounding-error} we know that
\[
\left||X\cap A_i|-pa_i\right|
=O(\sqrt{a_i}),
\]
which completes the proof of the lemma.\end{proof}

The tournament structure supplies exactly the growth condition required above.

\begin{lemma}\label{thm:out-splitting}
There exists an absolute constant $B>0$ such that the following holds. Let $M$ be a positive integer, let $T$ be a tournament on $M$ vertices, and let $q$ be an integer with $0\le q\le M$. Then there exists a set $X\subseteq V(T)$ such that $|X|=q$ and
$$
\left|
|N_T^+(v)\cap X|-\frac{q}{M}d_T^+(v)
\right|
\le B\sqrt{d_T^+(v)}
\quad \text{for every $v\in V(T)$.}$$

\end{lemma}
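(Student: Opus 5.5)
The plan is to deduce this directly from Lemma~\ref{thm:rounding}, applied with $\Omega=V(T)$ and the indexed family $\mathcal A=(N_T^+(v))_{v\in V(T),\,d_T^+(v)\ge1}$. Vertices of out-degree zero need no treatment: for such $v$ both sides of the required inequality vanish, since $N_T^+(v)\cap X=\emptyset$ and $\frac qM d_T^+(v)=0$. Thus the only thing to verify is the growth hypothesis
\[
\bigl|\{v\in V(T):1\le d_T^+(v)\le t\}\bigr|\le 3t
\qquad\text{for every integer } 1\le t\le M .
\]

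To check the growth hypothesis, let $S=\{v:d_T^+(v)\le t\}$ and $k=|S|$. The subtournament $T[S]$ has $\binom k2$ arcs, and each of them is an out-arc of some vertex of $S$. Hence
\[
\binom k2\le\sum_{v\in S}d_T^+(v)\le kt .
\]
This gives $(k-1)/2\le t$, so $k\le 2t+1\le 3t$ whenever $t\ge1$. The family consisting only of nonempty out-neighbourhoods is a subfamily of this set, so it satisfies the hypothesis as well.

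Lemma~\ref{thm:rounding} then yields, for each $q$ with $0\le q\le M$, a set $X$ with $|X|=q$ and
\[
\bigl||X\cap N_T^+(v)|-\tfrac qM d_T^+(v)\bigr|\le B\sqrt{d_T^+(v)}
\]
for every $v$ with positive out-degree. We take $B$ to be the same absolute constant as in Lemma~\ref{thm:rounding}.

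Two degenerate cases need a remark. If every vertex has out-degree zero, which happens only when $M=1$, the family is empty. Then any $q$-set works, or we note that Lemma~\ref{thm:rounding} is vacuous for $m=0$. Repeated sets in the family are harmless, because the family is indexed.

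There is no real obstacle in this argument; the one substantive step is the counting bound $|S|\le 2t+1$.
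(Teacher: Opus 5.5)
Your proposal is correct and matches the paper's proof. Like the paper, you apply the rounding lemma to the family of nonempty out-neighbourhoods, verify the growth condition by counting arcs of $T[U_t]$ to get $|U_t|\le 2t+1\le 3t$, and dispose of out-degree-zero vertices trivially. Your separate handling of the empty family when $M=1$ is a small extra point of care that the paper's statement of the rounding lemma, which assumes $m\ge 1$, leaves implicit.
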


\begin{proof}
For any integer $t\ge1$, set
$
U_t=\{v\in V(T):d_T^+(v)\le t\}
$. The average out-degree of $T[U_t]$ is
$(|U_t|-1)/2$, while every vertex of $U_t$ has out-degree at most $t$ in
$T[U_t]$. Hence
$
\frac{|U_t|-1}{2}\le t,
$
and therefore $|U_t|\le2t+1\le3t$.

Thus the indexed family
$
\mathcal A=
\bigl(N_T^+(v):v\in V(T),\ d_T^+(v)>0\bigr)
$
satisfies the hypothesis of Lemma~\ref{thm:rounding}. Applying that
lemma, we obtain a set $X\subseteq V(T)$ of size $q$ such that
\[
\left|
|N_T^+(v)\cap X|-\frac{q}{M}d_T^+(v)
\right|
\le B\sqrt{d_T^+(v)}
\]
for every vertex $v$ of positive out-degree. If $d_T^+(v)=0$, then
both terms inside the absolute value are zero, so the same inequality
holds trivially.
\end{proof}

\begin{proof}[Proof of Proposition~\ref{thm:deficit-bound}]
Let $T$ be a tournament on $M=\lfloor\alpha n\rfloor$ vertices with $\delta^+(T)\ge s$, and we denote $p=n/M$. By Lemma~\ref{thm:out-splitting}, there is an $n$-set $X$ such that, for every vertex $v$, $$|N_T^+(v)\cap X|\ge p d_T^+(v)-B\sqrt{d_T^+(v)}.$$  Since $p\ge1/\alpha$, the function $x\mapsto px-B\sqrt x$ is increasing on $[s,\infty)$ whenever $s\ge B^2\alpha^2/4$. Thus, for every $v\in X$, we have
\[
d_{T[X]}^+(v)\ge ps-B\sqrt s\ge \frac{s}{\alpha}-B\sqrt s.
\]
It follows that $\delta^+(T[X])\ge s/\alpha-B\sqrt s$.
\end{proof}

\begin{proof}[Proof of Theorem~\ref{thm:general-alpha}]
Propositions~\ref{thm:construction} and~\ref{thm:deficit-bound}
together prove the theorem.
\end{proof}

We remark that Corollaries~\ref{cor:main} and~\ref{cor:general-alpha} follow immediately from Theorem~\ref{thm:general-alpha} and the upper bound due to Alon~\cite{Alon}.

\section*{Acknowledgements}

B. Chen was supported by the NSFC under grant numbers 12501473, 12471336. L. Wang was supported by the National Key R\&D Program of China under grant number 2024YFA1013900, the NSFC under grant number 12471327, the China Scholarship Council, and the Institute for Basic Science (IBS-R029-C4). The authors used AI to assist in working out their ideas and improving the grammar and presentation of the manuscript. All mathematical arguments and proofs in the final manuscript were written by the authors.

\end{document}